\documentclass[11pt, reqno]{article}
\usepackage{a4wide}
\usepackage{tocloft}
\usepackage{amssymb}
\usepackage{amsmath}
\usepackage{amsthm}
\usepackage{tikz}
\usepackage{amscd}
\usepackage{tikz-cd}
\usetikzlibrary{cd}

\usepackage[curve]{xypic}
\usepackage{hyperref}
\usepackage[all]{xy}
\usepackage{color}
\theoremstyle{plain}
\usepackage[margin=1in]{geometry}

\newcommand{\id}{\operatorname{id}}

\newcommand{\sch}[1]{\operatorname{{\bf  #1}}}
\newcommand{\integers}[1]{\operatorname{\mathfrak{o}_{{\it #1}}}}
\newcommand{\ideal}[1]{\operatorname{\mathfrak{p}_{{\it #1}}}}
\newcommand{\mul}{\operatorname{\mathbb{G}_{\text{m}}}}
\newcommand{\ind}{\operatorname{ind}}

\newcommand{\diag}{\operatorname{diag}}

\newcommand{\End}{\operatorname{End}}
\newcommand{\Ext}{\operatorname{Ext}}

\newcommand{\bv}{\operatorname{{\bf v}}}

\newtheorem{theorem}{Theorem}[section] 
\newtheorem*{theorem*}{Theorem}
\newtheorem{corollary}[theorem]{Corollary}
\newtheorem{lemma}[theorem]{Lemma}
\newtheorem{remark}[theorem]{Remark}
\newtheorem{proposition}[theorem]{Proposition}

\newtheorem{example}[theorem]{Example}

\newtheorem{hypothesis}{Hypothesis}[section]
 
\setlength{\parskip}{.6em}

\author{\large{ Santosh Nadimpalli }} \date{\today} \title{On
  extensions of characters of affine pro-$p$ Iwahori--Hecke algebra.}
\begin{document}
\maketitle
\begin{abstract}
  Let $K$ be a non-discrete non-Archimedean local field with residue
  characteristic $p$. Let $G$ be the group of $K$ rational points of a
  algebraic connected reductive group defined over $K$. In this
  article we compute the extensions between characters of affine
  pro-$p$ Iwahori--Hecke algebra $\mathcal{H}^{\text{aff}}$ over an
  algebraically closed field $R$ of characteristic $p$. 
\end{abstract}
\section{Introduction}
Let $K$ be a non-discrete non-Archimedean local field and $\sch{G}$ be
a connected reductive group scheme over $K$ with an irreducible root
system.  In this article we are interested in computing extensions of
supersingular characters of affine pro-$p$ Iwahori--Hecke algebra,
denoted by $\mathcal{H}^{\text{aff}}$ (see section $2$). In the
context of mod-$p$ local Langlands correspondence the Iwahori--Hecke
algebra, denoted by $\mathcal{H}$, plays a very important role. For
instance, when $\sch{G}={\rm GL}_n/K$ a numerical correspondence
between absolutely simple supersingular $\mathcal{H}$-modules of
dimension $n$ and $n$-dimensional absolutely irreducible mod-$p$
representations of the absolute Galois group of $\mathbb{Q}_p$ was
conjectured by Vigneras (see \cite{vigneras_split}) and is proved by
Ollivier for ${\rm GL}_n(\mathbb{Q}_p)$ (see \cite[Theorem
1.1]{ollivier_numerical}). This numerical correspondence is extended
as an exact functor by the work of Grosse-Kl\"onne (see \cite[Theorem
8.8]{grosse_klonne_hecke_galois_1}). This article is an attempt to
understand the blocks of the category of $\mathcal{H}$-modules.

We compute the dimension of degree one extensions of characters of the
affine pro-$p$ Iwahori--Hekce algebra for a connected reductive group
over a $p$-adic field. With the recent work of Abe on parabolic
induction and their adjoint functors (see \cite{abe_hecke_induct_1}
and \cite{abe_hecke_induct_2}) we know the dimension of
degree one extensions between simple modules of
$\mathcal{H}^{\text{aff}}$. In particular if the group is semi-simple
and simply connected the dimension of degree one extensions between
simple modules of pro-$p$ Iwahori--Hecke algebra, denoted by
$\mathcal{H}$, can be computed using the data of support of the
supersingular characters (see section $2$).

The structure of the algebra $\mathcal{H}$ is studied in detail by
Vigneras in the article \cite{vigneras_split} when $\sch{G}$ is split
and in the article \cite{vigneras_pro-p_hecke_alg} for any connected
reductive group $\sch{G}$. Among other things she showed presentations
of the algebra $\mathcal{H}$ similar to that of Iwahori--Matsumoto and
Bernstein as in the classical Iwahori--Hecke algebra. Ollivier defied
the notion of supersingularity for a $\mathcal{H}$ module in the split
case (see \cite{ollivier_satake_comp}) and later obtained a
classification of simple supersingular modules of $\mathcal{H}$. These
results were generalised to arbitrary groups by Vigneras (see
\cite{vigneras_modules}). All simple modules are constructed from
supersingular representations (see \cite{ollivier_induct} and
\cite{abe_hecke_induct_1}). The simple supersingular modules are
characterised completely by the results of Ollivier and Vigneras and we
use their explicit description in our calculations.

The dimensions of extensions spaces of simple modules over
$\mathcal{H}$ are computed for ${\rm GL}_2/K$ by Breuil and Paskunas
(see \cite{towards_mop_p_langlands}). The homological dimension of the
algebra $\mathcal{H}$ is investigated by Koziol in the article
\cite{koziol_hom_dim}. When $\sch{G}$ is split group Koziol showed
that the homological dimension is usually infinite. In
this regard the higher extensions always exist but the question of
blocks for the category of modules over $\mathcal{H}$ is to be
determined. The work of Abdellatif and the author gives dimensions of
extension spaces between simple modules of $\mathcal{H}$ for
${\rm SL}_2$. For rank one groups, it was observed that the notion of
$L$-packets and the blocks are closely related. To understand the
blocks of Iwahori--Hecke modules we first compute the dimension of
degree one extensions (see Theorem \ref{main_theorem}). For the case
of ${\rm SL}_n$ the defintion of $L$-packets was given by Koziol (see
\cite[Definition 6.4]{koziol_sl_packets}). We observed that the
notions of supersingular blocks and $L$-packets do not coincide if
$n>2$.

In this article we explicitly compute the blocks of simple
$\mathcal{H}$ modules for unramified unitary groups in $2$ and $3$
variables denoted by $U(1,1)$ and $U(2,1)$ respectively. The case of
$U(1,1)$ is similar to that of ${\rm SL}_2$ and blocks and $L$-packets
are the same. This paper is the authors attempt to understand the
relationship between blocks and $L$ packets for general reductive
groups. Even for the case of ${\rm SL}_n$ the complete relationship
between blocks and $L$-packets for $n>3$ is not complete.
\section{Preliminaries}
Let $K$ be a non-Archimedean non-discrete local field with ring of
integers $\integers{K}$, its maximal ideal $\ideal{K}$ and residue
field $k$ of cardinality $q$ a power of prime $p$. In this article all
modules and representations are over a fixed algebraically closed
field $R$ of characteristic $p$. Let $\sch{G}$ be a connected
reductive group scheme over $K$. We denote by $\mathfrak{X}_K$ the
adjoint Bruhat--Tits building associated to $(\sch{G}, K)$ and
$j:\mathfrak{X}_K\hookrightarrow \mathfrak{X}_K'$ be the enlarged
Bruhat--Tits building. For any facet $F$ of $\mathfrak{X}_K$ we denote
by $\sch{G}_F$ the Bruhat--Tits group scheme over $\integers{K}$
associated to $F$ such that $\sch{G}_F(\integers{K})$ is isomorphic to
the pointwise $G$ stabiliser of the facet $j(F)$ and
$\sch{G}_F\times_{\integers{K}}K\simeq \sch{G}$. Let $\sch{G}_F^0$ be
the connected component of $\sch{G}_F$ and let $P_F$ be the group
$\sch{G}_F^0(\integers{K})$. The group $P_F$ is the parahoric subgroup
of $\sch{G}(K)$. Let $U_{F, k}$ be the unipotent radical of
$\sch{G}^0\times_{\integers{K}} k$. Let $I_F$ be the pro-$p$ group
$$\{\sch{G}^0_F(\integers{K})\ |\ g\in U_{F, k}(k)\ \text{mod}\
\ideal{K}\}.$$
For any $K$-group scheme $\sch{H}$ we denote by $H$ the group
$\sch{H}(K)$. 

In the rest of the section the main reference is
\cite[section 1.3]{vigneras_modules}. Let $\sch{T}$ be the maximal $K$-split
torus contained in $\sch{G}$. We denote by $\sch{N}$ and $\sch{Z}$ 
the normaliser and centraliser of the torus $\sch{T}$. Let $V$ be the
space spanned by the set of coroots
$\Phi(\sch{G}, \sch{T})^{\vee}\subset
X_\ast(\sch{T})\otimes\mathbb{R}$. Let $\mathcal{A}_{\sch{T}}$ be the
apartment in $\mathfrak{X}_K$ corresponding to $\sch{T}$ and
$\nu:Z\rightarrow V$ be the Bruhat--Tits homomorphism. The group $Z$
acts on $\mathcal{A}_{\sch{T}}$ by translations via the map
$\nu:Z\rightarrow V$ moreover this action extends to an action of $N$.
Let $Z_0$ be the unique parahoric subgroup of $Z$ and $Z_1$ be the
maximal pro-$p$ subgroup of $Z_0$. We denote by $Z_k$ the quotient
$Z_0/Z_1$. Let $W(1)$ be the group $N/Z_1$ and $W$ be the extended
affine Weyl group $N/Z_0$. With these notations $W(1)$ fits in the
following exact sequence.
$$0\rightarrow Z_k\rightarrow W(1)\rightarrow W\rightarrow 0.$$
Let $\Lambda$ be the group $Z/Z_0$, the group $W_0$ normalizes $Z/Z_0$
and we have an isomorphism $W\simeq \Lambda\rtimes W_0$. The
homomorphism $\nu:Z\rightarrow V$ factorizes through $Z_0$ and hence
$W$ acts on $V$.

We fix a chamber $C$ contained in $\mathcal{A}_{\sch{T}}$. The group
$P_C$ is the Iwahori subgroup and $I_C$ is the maximal pro-$p$
subgroup of $P_C$. The maps $n\mapsto P_CnP_C$ and $n\mapsto I_CnI_C$
induces bijections $W\simeq P_C\backslash G/P_C$ and
$W(1)\simeq I_C\backslash G/I_C$ respectively. Let $\mathcal{S}(C)$ be
the set of faces of $\bar{C}$ and for every $F\in \mathcal{S}(C)$ we
denote by $s_F$ the affine reflection fixing the face $F$. Let
$W^{\text{aff}}$ be the group generated by
$\{s_F\ |\ F\in \mathcal{S}(C)\}$, we denote this set of reflations by
$S^{\text{aff}}$. The group $W^{\text{aff}}$ is called the affine Weyl
group. The pair $(W^{\text{aff}}, S^{\text{aff}})$ is a Coxeter system
and the group $W^{\text{aff}}$ is contained in $\nu(N)$ the image of
$N$ in the group of affine automorphisms of
$\mathcal{A}_{\sch{T}}$. Let $l:W^{\text{aff}}\rightarrow \mathbb{Z}$
be the length function of the Coxeter system
$(W^{\text{aff}}, S^{\text{aff}})$.

For any $F\in \mathcal{S}(C)$ we denote by $G_{F,k}$ the group
$P_F/I_F$. The group $P_{F, k}$ is also the $k$-points of the
reductive quotient of $\sch{G}^0\times_{\integers{K}}k$, denoted by
$\sch{G}_{F, k}$. The group $\sch{G}_{F, k}$ is a connected reductive
group of rank one. The image of $I_F$ in $P_{F, k}$ is the group of
$k$-rational points of the unipotent radical $\sch{U}_{F, k}$ of a
Borel subgroup ${T}_{F, k}{U}_{F, k}$. We denote by
$\overline{\sch{U}}_{F, k}$ be the unipotent radical of the opposite
Borel subgroup of $\sch{G}_{F, k}$. We denote by $Z_{F, k}$ the group
$Z_k\cap<U_{F, k}, \overline{U}_{F,k}>$ where
$<U_{F, k}, \overline{U}_{F, k}>$ is the group generated by the two
opposite unipotent groups. Moreover for any $s\in S^{\text{aff}}$
there exits an $n_s\in N\cap P_s$ such that its image in $G_{k,s}$
belongs to the group $<U_{F, k}, \overline{U}_{F, k}>$. The image of
$n_s$ in $W(1)$ is called an admissible lift of $s$.

Let $\Omega$ be the $W$ stabiliser of $C$. The group $W$ can be
identified with $W^{\text{aff}}\rtimes \Omega$.  The group $\Omega$
normalizes $W^{\text{aff}}$ and the length function $l$ extends to a
function on $W$. We denote by $l$ the inflation of $l$ to $W(1)$. If
$\sch{G}$ is semi-simple simply-connected group $\Omega$ is
trivial. The group $\Omega$ is trivial in some other interesting
cases. Consider an unramified quadratic extension $L$ of $K$ and
$(W, h)$ be a pair consisting of a vector space $W$ over $L$ and $h$
is a hermitian form. Assume that the dimension of the anisotropic part
of $W$ is less than one. The unitary group $\sch{U}(W)/K$ associated
to the pair $(W, h)$ is quasi-split and in this case
$W=W^{\text{aff}}$. We may take $L$ to be a separated ramified
quadratic extension of $K$, when the dimension of $W$ is odd and we
have $W=W^{\text{aff}}$ for $\sch{U}(W)/K$.

Let $\mathcal{H}$ be the algebra $\End_{G}(\ind_{I_C}^G\id)$ and we
identify $\mathcal{H}$ with the space of functions $f:G\rightarrow R$
such that $f(i_1gi_2)=f(g)$ for all $i_1, i_2\in I_C$ and $g\in
G$. For any $w\in W(1)$ we denote by $T_w$ the characteristic function
on $I_CwI_C$. The elements $\{T_w\}_{w\in W(1)}$ form a basis for the
Hecke algebra and $\mathcal{H}$ admits the following presentation
given by two sets of relations
\begin{enumerate}
\item (braid relations) For any $w, w'\in W(1)$ such that
  $l(w)+l(w')=l(ww')$ we have $T_{w}T_{w'}=T_{ww'}$
\item (quadratic relations) For any $s\in S^{\text{aff}}$ we have
  $T_s^2=-c_sT_s$ where $c_s=1/|Z_{k,s}|\sum_{z\in Z_{k,s}}T_z$.
\end{enumerate} 
We denote by $\mathcal{H}^{\text{aff}}$ the subalgebra of
$\mathcal{H}$ generated by $\{T_{w}\ | \ w\in W^{\text{aff}}\}$. The
algebra $\mathcal{H}$ is called the pro-$p$ Iwahori--Hecke algebra and
$\mathcal{H}^{\text{aff}}$ the affine pro-$p$ Iwahori--Hecke
algebra. The algebra $\mathcal{H}$ is isomorphic to a certain twisted
tensor product of $R[Z_k]$ and $\mathcal{H}^{\text{aff}}$. In this
article we restrict to the characters of $\mathcal{H}^{\text{aff}}$
and we will not need this description. Let $\iota$ be an involutive
$R$ automorphism of $\mathcal{H}$ such that
$\iota(T_{\tilde{s}})=T_{\tilde{s}}-c_{\tilde{s}}$.

We restrict ourselves to characters of $\mathcal{H}$ and
$\mathcal{H}^{\text{aff}}$ we do not recall the description of all
simple modules. We first describe the set of characters of
$\mathcal{H}^{\text{aff}}$ (see \cite[Theorem
1.6]{vigneras_modules}). Let $\lambda$ be any character of $Z_k$ and
we denote by $S_\lambda$ the set
$\{s\in S^{\text{aff}}\ |\ \lambda(c_s)\neq 0\}$. For any subset $I$
of $S^{\text{aff}}$ we denote by $W_I$ the subgroup of $W$ generated
by $s\in I$. The set of characters of the algebra $\mathcal{H}$ are
parametrised by pairs $(\lambda, I)$ consisting of a character
$\lambda$ of $Z_k$ and a subset $I$ of $S_\lambda$. We denote by
$\xi_{\lambda, I}$ the character corresponding to $(\lambda, I)$ and
is given by
\begin{equation}
  \xi_{\lambda, I}(T_{wt})=0\ \text{for all}\ w\in W\backslash W_I\
  \text{and} \ t\in Z_k.
\end{equation}
\begin{equation}
\xi_{\lambda, I}(T_{wt})=\lambda(t)(-1)^{l(w)}\ \text{for all}\ w\in W_I\
\text{and} \ t\in Z_k.
\end{equation}
For any character $\xi$ of $\mathcal{H}^{\text{aff}}$ we denote by
$S_{\xi}$ the set $\{s\in S^{\text{aff}}\ |\ \ c_{s}(T_{\tilde{s}})\}$
where $\tilde{s}$ is an admissible lift of $s$. This definition of
$S_{\xi}$ does not depend on admissible lifts. The character $\xi$ is
called sign character if $S_{\xi}=S^{\text{aff}}$. If $\xi$ is a sign
character then $\xi\circ\iota$ is called the trivial character. Any
character $\xi$ is supersingular if and only if $\xi$ is not a sign
character or trivial character (see \cite[Theorem
1.6]{vigneras_modules}).

\section{Calculations of degree one extensions.}

In this section we want to compute the dimension of the spaces
$\Ext^1_{\mathcal{H}^{\text{aff}}}(\xi_{\lambda_1, I_1}, \xi_{\lambda_2, I_2})$. We
denote by $\mathcal{H}$ the affine pro-$p$ Iwahori--Hecke algebra by
abuse of notation. The algebra $\mathcal{H}$ is generated by $T_t$ for
$t\in Z_k$ and $T_{\tilde{s}}$ where $s\in S_{\text{aff}}$. For
convenience we drop the $\tilde{s}$ in the admissible lift of $s$. We
use the generators and relations to calculate the dimension of the
degree one extensions.

Let $E$ be an extension of $\mathfrak{m}:=\xi_{\lambda_2, I_2}$ by
$\mathfrak{n}:=\xi_{\lambda_1, I_1}$, i.e, we have
$$0\rightarrow \mathfrak{n}\xrightarrow{p}
E\xrightarrow{q}\mathfrak{m}\rightarrow 0.$$ We fix two non-zero
vectors $\bv_1'$ and $\bv_2'$ in $\mathfrak{n}$ and $\mathfrak{m}$
respectively. Fix a $Z_k$ equivariant section
$s:\mathfrak{m}\rightarrow E$ of the map $q$. Let $\bv_1$ and $\bv_2$
be the vectors $p(\bv_1')$ and $s(\bv_2')$. For any
$s\in S^{\text{aff}}$ let the action of $T_{s}$ on $\bv_2$ be
\begin{align*}
T_{s}\bv_2&=a_{s}\bv_1-\bv_2\ \forall \ s\in I_2,\\
T_{s}\bv_2&=a_{s}\bv_1, \ \forall \ s\not\in I_2.
\end{align*}
Moreover for any $t$ in $ Z_k$ we have,
\begin{align*}
\lambda_2(t^s)(a_s\bv_1-\bv_2) =T_sT_{t^s}\bv_2
  &=T_tT_s\bv_2=a_s\lambda_1(t)\bv_1-\lambda_2(t)\bv_2 \ \forall \
    s\in I_2, \\
a_s\lambda_2(t^s)\bv_1=\lambda_2(t^s)T_s\bv_2=T_sT_{t^s}\bv_2
  &=T_tT_s\bv_2=\lambda(t)a_s\bv_1, \ \forall \ s\not\in I_2.
\end{align*} 
From the above relations we get that
$a_s((s\lambda_2)(t)-\lambda_1(t))=0$ for all $s\in S_{\text{aff}}$
and $t\in Z_k$. Let $I_E$ be the set
$\{s\in S_{\text{aff}}\ | \ a_s\neq 0\}$. If $E$ is non-split
extension then the set $I_E$ is non-empty and moreover we have
$\lambda_2^s=\lambda_1$ for all $s\in I_E$. If
$\lambda_1\neq \lambda_2$ the values $(a_s)_{s\in S^{\text{aff}}}$ are
determined by $E$ and does not depend on the choice of
$s:\mathfrak{m}\rightarrow E$. If $\lambda_1=\lambda_2$ the section
$s:\mathfrak{m}\rightarrow E$ is not unique we have to take this into
consideration to identify the space of extensions. But for the present
purpose let us fix a section $s:\mathfrak{m}\rightarrow E$.
\begin{lemma}\normalfont\label{quadratic_relations}
For a fixed basis $(\bv_1, \bv_2)$ as above we get that $a_s=0$ for
any $s\in I_1\cap I_2$. If $s\in S_{\lambda_1}$, $s\not\in I_1$
and $s\not\in I_2$ then $a_s=0$. If $s\not\in S_{\lambda_1}$ and $s\in
I_2$ then $a_s=0$.  With the above relations on $a_s$ the
quadratic relations are satisfied for all $s\in S_{\text{aff}}$. 
\end{lemma}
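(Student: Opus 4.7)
The plan is to apply the quadratic relation $T_s^2 = -c_s T_s$ to the vector $\bv_2$ and read off the forced vanishing of $a_s$ by comparing the coefficients of $\bv_1$ and $\bv_2$ on the two sides of the resulting equality.

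Two preliminary facts are needed. First, $c_s$ is an idempotent in $\mathcal{H}^{\text{aff}}$, being the averaging element of the group $Z_{k,s}$ whose order is prime to $p$; consequently, for any $Z_k$-eigenvector $\bv$ with character $\lambda$, one has $c_s \bv = \lambda(c_s) \bv$ with $\lambda(c_s) \in \{0,1\}$, taking the value $1$ exactly when $s \in S_\lambda$. Second, from the explicit formulas defining $\xi_{\lambda_1, I_1}$ one has $T_s \bv_1 = -\bv_1$ if $s \in I_1$ and $T_s \bv_1 = 0$ otherwise.

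With these in hand, I would expand $T_s^2 \bv_2$ directly (using the recipe for $T_s \bv_2$ given just before the lemma, then applying $T_s$ to the result via the two formulas for $T_s \bv_1$) and compare with $-c_s T_s \bv_2$. For $s \in I_1 \cap I_2$ one obtains $T_s^2 \bv_2 = -2 a_s \bv_1 + \bv_2$ and $-c_s T_s \bv_2 = -a_s \bv_1 + \bv_2$, forcing $a_s = 0$. For $s \in S_{\lambda_1}$ with $s \notin I_1$ and $s \notin I_2$, the two sides reduce to $0$ and $-a_s \bv_1$ respectively, again forcing $a_s = 0$. For $s \notin S_{\lambda_1}$ with $s \in I_2$ (whence automatically $s \notin I_1$), using $c_s \bv_1 = 0$ and $c_s \bv_2 = \bv_2$ one finds $T_s^2 \bv_2 = -a_s \bv_1 + \bv_2$ while $-c_s T_s \bv_2 = \bv_2$, once more yielding $a_s = 0$.

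For the final assertion, the same case analysis shows that in every remaining configuration (namely $s \in I_1 \setminus I_2$; $s \in I_2 \setminus I_1$ with $s \in S_{\lambda_1}$; and $s$ outside $I_2 \cup S_{\lambda_1}$) the identity $T_s^2 \bv_2 = -c_s T_s \bv_2$ holds identically in $a_s$, so after imposing the three vanishing conditions derived above the quadratic relation is automatically satisfied on $\bv_2$; on $\bv_1$ it holds because $\mathfrak{n}$ is already an $\mathcal{H}^{\text{aff}}$-module. The main obstacle is purely organisational, keeping straight the four binary conditions $s \in I_1$, $s \in I_2$, $s \in S_{\lambda_1}$, $s \in S_{\lambda_2}$; no deeper input is needed beyond the idempotency of $c_s$ and the defining formulas for $\xi_{\lambda, I}$.
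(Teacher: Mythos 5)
Your proposal is correct and follows essentially the same route as the paper: apply the quadratic relation $T_s^2=-c_sT_s$ to $\bv_2$, use that $c_s$ acts on a $\lambda$-eigenvector by $1$ or $0$ according to whether $s\in S_\lambda$, and run the case analysis on membership of $s$ in $I_1$, $I_2$, $S_{\lambda_1}$; your coefficient computations (including $T_s^2\bv_2=-2a_s\bv_1+\bv_2$ in the case $s\in I_1\cap I_2$, where the paper has a harmless sign typo) and your list of the three configurations where $a_s$ remains unconstrained agree with the paper's.
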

\begin{proof}
  In this lemma we only use the quadratic relations on the elements
  $T_s$ for $s\in S_{\text{aff}}$. To begin with consider any
  $s\in I_1\cap I_2$. Consider the case where $s\in I_1\cap I_2$. In
  this case we have $T_s\bv_2=a_s\bv_1-\bv_2$ and
  $T_s\bv_1=-\bv_1$. Now the quadratic relation on $T_s$ gives us
$$c_s(-a_s\bv_1+\bv_2)=-c_sT_s\bv_2=T_s^2\bv_2=-2a_s\bv_1-\bv_2.$$
Now the reflection $s$ belongs to $I_1$ and $I_2$ and hence
$c_s\bv_1=c_s\bv_2=1$. This concludes that $a_s=0$. Now consider the
case where $s\not\in I_1$ and hence $s\not\in I_2$. In this case we
get that $T_s\bv_1=0$ and $T_s\bv_2=a_s\bv_1$. The quadratic relations
gives us $-c_sT_s\bv_2=0$. Hence we get that $-a_sc_s\bv_1=0$. Now if
$s\in S_{\lambda_1}$ then $c_s\bv_1=\bv_1$ which implies that $a_s=0$.

Consider the case where $s\in I_1$ and $s\not\in I_2$. We have
$T_s\bv_1=-\bv_1$ and $T_s\bv_2=a_s\bv_1$. Now
$T_s(T_s\bv_2)=-a_s\bv_1$. The element
$-c_sT_s\bv_2=-a_sc_s\bv_1$. Since $s\in I_1\subset S_{\lambda_1}$ we
get that $c_s\bv_1=\bv_1$. This shows that $T_s$ satisfies the
required quadratic relation. Now consider the case $s\in I_2$ and
$s\not\in I_1$. In this case we have $T_s\bv_2=a_s\bv_1-\bv_2$ and
$T_s\bv_1=0$. This shows that $T_s(T_s\bv_2)=-T_s\bv_2$. But
$-c_sT_s\bv_2$ is qual to $-a_sc_s\bv_1+c_s\bv_2$.  Now
$s\in I_2\subset S_{\lambda_2}$ we get that $c_s\bv_2=\bv_2$. Hence we
get that $a_sc_s\bv_1=a_s\bv_1$. Now $s\in S_{\lambda_1}$ then the
quadratic relation is satisfied but otherwise $a_s=0$.
\end{proof}
\begin{remark}\label{bridge}\normalfont
  Note that $a_s$ may be non-zero in either of the following cases. In
  the first case, $s\in I_1$ and $s\not\in I_2$, in the second case
  $s\in S_{\lambda_1}$, $s\not\in I_1$ and $s\in I_2$, the third case
  when $s\not\in S_{\lambda_1}$, $s\not\in I_2$. Now the existence of
  extensions and their isomorphism classes can be computed by
  examining the braid relations.
\end{remark}
\begin{lemma}\label{first_braid}\normalfont
For any $s_1$ and $s_2$ in $I_1\backslash (I_1\cap I_2)$ the constants
$a_{s_1}$ equals to $a_{s_2}$. If $s_1$ and $s_2$ belong to
$I_2\backslash (I_1\cap I_2)$ then $a_{s_1}=a_{s_2}$. 
\end{lemma}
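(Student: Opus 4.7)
The plan is to exploit the braid relations in $\mathcal{H}^{\text{aff}}$ between admissible lifts of simple affine reflections, evaluated on $\bv_1$ and $\bv_2$, whose $T_{\tilde{s}}$-action was recorded in the proof of Lemma~\ref{quadratic_relations}. For $s_1, s_2 \in S^{\text{aff}}$, let $m := m(s_1, s_2)$ denote the order of $s_1 s_2$ in the affine Weyl group. The braid relation of $W^{\text{aff}}$ lifts to an identity in $W(1)$ with a possible twist
\[
\underbrace{\tilde{s}_1 \tilde{s}_2 \tilde{s}_1 \cdots}_{m} = t \cdot \underbrace{\tilde{s}_2 \tilde{s}_1 \tilde{s}_2 \cdots}_{m},\qquad t \in Z_k,
\]
and via $T_{w} T_{w'} = T_{w w'}$ this yields the Hecke relation $T_{\tilde{s}_1} T_{\tilde{s}_2} \cdots = T_t \cdot T_{\tilde{s}_2} T_{\tilde{s}_1} \cdots$ with $m$ factors on each side of the $T$-products. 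I then evaluate both sides on $\bv_1$ and $\bv_2$.

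For $s_1, s_2 \in I_1 \setminus (I_1 \cap I_2)$, one has $T_{\tilde{s}_i} \bv_1 = -\bv_1$ and $T_{\tilde{s}_i} \bv_2 = a_{s_i} \bv_1$. Evaluating the braid identity at $\bv_1$ gives $(-1)^m \bv_1 = \lambda_1(t)(-1)^m \bv_1$, forcing $\lambda_1(t) = 1$. Evaluating at $\bv_2$, the rightmost operator $T_{\tilde{s}_*}$ sends $\bv_2$ to $a_{s_*} \bv_1$ and each subsequent factor contributes a factor of $-1$, so the left and right sides become $(-1)^{m-1} a_{s_*} \bv_1$ and $\lambda_1(t)(-1)^{m-1} a_{s_{**}} \bv_1$ respectively, with $\{s_*, s_{**}\} = \{s_1, s_2\}$. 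Using $\lambda_1(t) = 1$, I conclude $a_{s_1} = a_{s_2}$.

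For $s_1, s_2 \in I_2 \setminus (I_1 \cap I_2)$, I have instead $T_{\tilde{s}_i} \bv_1 = 0$ and $T_{\tilde{s}_i} \bv_2 = a_{s_i} \bv_1 - \bv_2$. Reducing the braid identity modulo $\mathfrak{n}$ and evaluating on the image of $\bv_2$ in $\mathfrak{m}$, which is a $(-1)$-eigenvector of every such $T_{\tilde{s}_i}$, gives $\lambda_2(t) = 1$. A short induction on the number of factors, tracking the $\bv_1$- and $\bv_2$-components separately, shows that evaluating the full $m$-fold product at $\bv_2$ produces $(-1)^{m+1} a_{s_*} \bv_1 + (-1)^m \bv_2$ where $s_*$ is the leftmost factor. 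Comparing the $\bv_1$-coefficients of both sides yields $a_{s_1} = \lambda_1(t) a_{s_2}$, from which $a_{s_1} = a_{s_2}$ is obtained by verifying $\lambda_1(t) = 1$; this last step uses the compatibility $\lambda_1 = s_i \lambda_2$ available whenever $a_{s_i} \neq 0$ together with the Bruhat--Tits description of $t$, which is invariant under the relevant Weyl conjugation action.

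The main obstacle is managing the $Z_k$-twist $t$ in the lifted braid relation of $W(1)$ and verifying $\lambda_1(t) = 1$ in the second case. Once this bookkeeping is under control, the remainder of the argument is a direct computation combining the quadratic and braid relations.
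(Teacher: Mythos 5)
Your argument is correct and follows essentially the same route as the paper: both proofs evaluate the two alternating products of $T_{\tilde{s}_1}$ and $T_{\tilde{s}_2}$ on $\bv_1$ and $\bv_2$ and compare coefficients, and your closed forms for these products (namely $(-1)^{m-1}a_{s_*}\bv_1$ in the first case and $(-1)^{m+1}a_{s_*}\bv_1+(-1)^m\bv_2$ in the second) agree with the paper's computation. The only difference is that you explicitly track the possible $Z_k$-twist $t$ in the lifted braid relation, which the paper silently takes to be trivial; your verification that $\lambda_1(t)=\lambda_2(t)=1$ is clean in the first case and somewhat hand-waved in the second (the appeal to conjugation-invariance of $t$ deserves a line of justification, or one can simply invoke that admissible lifts can be chosen to satisfy the braid relations exactly in $W(1)$), but this is a harmless refinement rather than a divergence in method.
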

\begin{proof}
  In the first case the action of $T_{s_1}$ and $T_{s_2}$ is given by
  $T_{s_1}\bv_1=-\bv_1$, $T_{s_1}\bv_2=a_{s_1}\bv_1$ and
  $T_{s_2}\bv_1=-\bv_1$ and $T_{s_2}\bv_2=a_{s_2}\bv_1$. Now
  $(T_{s_i}T_{s_j})^m\bv_2=-a_{s_j}\bv_1$ and
  $T_{s_j}(T_{s_i}T_{s_j})^m\bv_2=a_{s_j}\bv_1$. Now by braid relation
  for $T_{s_1}$ and $T_{s_2}$ we get that $a_{s_i}=a_{s_j}$. In the
  second case the action of $T_{s_1}$ and $T_{s_2}$ on $E$ are given
  by $T_{s_i}\bv_1=0$ and $T_{s_i}\bv_2=a_{s_i}\bv_1-\bv_2$. Now
  $(T_{s_i}T_{s_j})^m\bv_2=-a_{s_i}\bv_1+\bv_2$ and
  $T_{s_j}(T_{s_i}T_{s_j})^m\bv_2=a_{s_j}\bv_1-\bv_2$. Using braid
  relations we get that $a_{s_i}=a_{s_j}$.
\end{proof}
Note that in the third case when $s\not\in S_{\lambda_1}$ and
$s\in I_2$, $a_s=0$. Now if there is an $s$ in the third case we get
that for any $s\in I_2\backslash (I_1\cap I_1)$ the value $a_s=0$. The
next lemma concludes the verification of the remaining braid relations
on $T_{s}$ for $s\in S_{\text{aff}}$.
\begin{hypothesis}
We make the following hypothesis on the function
$a:S^{\text{aff}}\rightarrow \bar{k}$ sending $s\mapsto a_s$. 
\begin{enumerate}\normalfont\label{conditions}
\item the conditions on $a_{s}$ satisfied by Lemmas
  \ref{quadratic_relations} and \ref{first_braid},
\item If there exist $s_1\in I_1\backslash (I_1\cap I_2)$ and
  $s_2\in I_2\backslash (I_1\cap I_2)$ such that the order of $s_1s_2$
  is $2$ then we have $a_{s_i}+a_{s_j}=0$ for all
  $s_i\in I_1\backslash (I_1\cap I_2)$ and
  $s_j\in I_2\backslash (I_1\cap I_2)$,
\item Let $s\in S^{\text{aff}}\backslash (I_1\cup I_2)$ and there
  exists an element $s'\in I_1\backslash (I_1\cap I_2)$ such that the
  order of $ss'$ is $2$ then $a_s=0$,
\item Let $s\in S^{\text{aff}}\backslash (I_1\cup I_2)$ and there
  exists an element $s'\in I_2\backslash (I_1\cap I_2)$ such that the
  order of $ss'$ is $2$ then $a_s=0$,
\item Let $s\in S^{\text{aff}}\backslash (I_1\cup I_2)$ and there
  exists an element $s'\in  (I_1\cap I_2)$ such that the
  order of $ss'$ is $3$ then $a_s=0$.
\end{enumerate}
\end{hypothesis}
\begin{lemma}\label{final_braid}\normalfont
  Let $E$ be a $2$ dimensional vector space and for any basis
  $(\bv_1, \bv_2)$ of $E$ such that $T_t\bv_1=\lambda_1(t)\bv_1$ and
  $T_t\bv_2=\lambda_2(t)\bv_2$ for all $t\in Z_k$. Suppose the
  function $a:S^{\text{aff}}\rightarrow \bar{k}$ satisfy the following
  Hypothesis \ref{conditions}. The
  relations $T_s\bv_1=-\bv_1$ for $s\in I_1$, $T_s\bv_1=0$ for
  $s\not\in I_1$ and $T_s\bv_2=a_s\bv_1-\bv_2$ for $s\in I_2$,
  $T_s\bv_2=a_s\bv_1$ for $s\not\in I_2$ makes $E$ a $\mathcal{H}$
  module.
\end{lemma}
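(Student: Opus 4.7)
The lemma asserts that the prescribed action extends to an $\mathcal{H}$-module structure on $E$. By the presentation of $\mathcal{H}$ recalled in the preliminaries, this amounts to verifying three families of relations on the generators: the $Z_k$-commutation $T_t T_s = T_s T_{t^s}$ for $t \in Z_k$ and $s \in S^{\text{aff}}$, the quadratic relations $T_s^2 = -c_s T_s$, and the Coxeter braid relations $T_{s_1}T_{s_2}T_{s_1}\cdots = T_{s_2}T_{s_1}T_{s_2}\cdots$ (of length $m_{s_1,s_2}$) for each pair of distinct $s_1,s_2 \in S^{\text{aff}}$. The first family follows directly from the fact that $\bv_1,\bv_2$ are $Z_k$-eigenvectors together with the identity $a_s(\lambda_2^s - \lambda_1) = 0$, which is forced by condition (1) of Hypothesis \ref{conditions} via the derivation preceding Lemma \ref{quadratic_relations}. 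The quadratic family is exactly the content of Lemma \ref{quadratic_relations}.

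The substantive work is therefore the braid relations, which I would attack by case analysis on the position of $s_1$ and $s_2$ with respect to the partition $S^{\text{aff}} = (I_1\cap I_2)\sqcup (I_1\setminus I_2)\sqcup (I_2\setminus I_1)\sqcup (S^{\text{aff}} \setminus (I_1\cup I_2))$. Pairs with both $s_i \in I_1\cap I_2$ are trivial because each $T_{s_i}$ acts as $-\id$ on the whole of $E$; pairs with both in $I_1\setminus I_2$ or both in $I_2\setminus I_1$ are already settled by Lemma \ref{first_braid}; pairs of the form $(I_1\cap I_2, I_j\setminus I_{3-j})$ reduce to a short computation since the $I_1\cap I_2$-operator acts on $E$ by $-\id$, which collapses alternating products after one application. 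The genuinely new cases are $s_1 \in I_1\setminus I_2$ paired with $s_2 \in I_2\setminus I_1$, and $s \in S^{\text{aff}}\setminus(I_1\cup I_2)$ paired against an element of each of the three populated regions. In each of these, expanding the two alternating products on the basis $(\bv_1,\bv_2)$ yields expressions linear in $a_{s_1}, a_{s_2}$, so the braid equality becomes a single scalar identity; requiring it to hold produces exactly the relations $a_{s_i}+a_{s_j}=0$ (when $m_{s_1,s_2}=2$) or $a_{s}=0$ (when $m_{s,s'}=3$ and $s' \in I_1\cap I_2$) that are stipulated in conditions (2)--(5) of Hypothesis \ref{conditions}.

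The main obstacle is the combinatorial bookkeeping: several region-pairs, each to be tested against the possible Coxeter orders $m_{s_1,s_2}\in\{2,3,4,6\}$. What keeps the analysis tractable is the observation that in every substantive case at least one of $T_{s_i}$ either annihilates $\bv_1$ or acts on it by $-1$, so the alternating products collapse into low-degree polynomials in the $a_{s_j}$ after at most two steps. Consequently the higher Coxeter orders $m=4,6$ either do not arise between the specific region-pairs under consideration (by the shape of the irreducible affine diagrams) or force the corresponding $a_{s_j}$ to vanish by the same collapse mechanism, so that the scalar identities extracted from cases $m=2$ and $m=3$ already cover every pair. Together with the preceding two lemmas, this shows that Hypothesis \ref{conditions} is sufficient to secure the entire system of braid relations, completing the proof.
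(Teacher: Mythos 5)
Your proposal follows essentially the same route as the paper: the quadratic relations are delegated to Lemma \ref{quadratic_relations}, and the braid relations are checked pair by pair over the four-region partition of $S^{\text{aff}}$ determined by $I_1$ and $I_2$, with the alternating products collapsing after at most two steps so that only the Coxeter orders $2$ and $3$ produce the scalar constraints recorded in Hypothesis \ref{conditions} (2)--(5) --- which is precisely the case analysis the paper writes out. The only pair you do not address explicitly is the one with both reflections in $S^{\text{aff}}\setminus(I_1\cup I_2)$, which is immediate since every alternating product of length at least two then annihilates $E$.
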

\begin{proof}
  From Lemma \ref{quadratic_relations} we have to verify braid
  relations in $\mathcal{H}$. Note that Lemma \ref{first_braid} we get
  the braid relations for pairs $(s_1, s_2)$ such that $s_1, s_2$ are
  both in $I_1\backslash (I_1\cap I_2)$ and
  $I_2\backslash (I_1\cap I_2)$. If $s_1$ and $s_2$ both belong to
  $I_1\cap I_2$ then $a_s=0$ and hence braid relations follow as $E$
  is a direct sum when restricted to the algebra generated by
  $T_{s_1}$ and $T_{s_2}$.  Let $s_1$ and $s_2$ belong to
  $S_{\text{aff}}\backslash (I_1\cup I_2)$. In this case we have:
  $T_{s_1}\bv_1=0$, $T_{s_1}\bv_2=a_{s_1}\bv_1$, $T_{s_2}\bv_1=0$ and
  $T_{s_2}\bv_2=a_{s_2}\bv_1$. Now $(T_{s_i}T_{s_j})^m=0$ for
  $m\geq 1$ from which the braid relations follow.

  Fix any $s_1\in I_1\backslash (I_1\cap I_2)$ and consider the case
  when $s_2\in I_1\cap I_2$ then we have $T_{s_1}\bv_1=-\bv_1$,
  $T_{s_1}\bv_2=a_{s_1}\bv_1$, $T_{s_2}\bv_1=-\bv_1$ and
  $T_{s_{2}}\bv_2=-\bv_2$. In this case we have
  $(T_{s_i}T_{s_j})^m\bv_2=-a_{s_1}\bv_1$ and
  $T_{s_j}(T_{s_i}T_{s_j})^m\bv_2=a_{s_1}\bv_1$. Hence the braid
  relations are satisfied. Now consider the case where
  $s_2\in I_2\backslash (I_1\cap I_2)$. In this case we have the
  relations $T_{s_2}\bv_1=0$ and
  $T_{s_2}\bv_2=a_{s_2}\bv_1-\bv_2$. With these relations we get that
  $T_{s_2}T_{s_1}\bv_2=0$,
  $T_{s_1}T_{s_2}\bv_2=-(a_{s_1}+a_{s_2})\bv_1$ and
  $T_{s_2}T_{s_1}T_{s_2}\bv_2=0$. By Hypothesis \ref{conditions}, (2)
  we get the braid relations in this case. Now consider the case when
  $s_1\in I_1\backslash (I_1\cap I_2)$ and
  $s_2\in S_{\text{aff}}\backslash (I_1\cup I_2)$. In this case we
  have $T_{s_2}\bv_1=0$ and $T_{s_2}\bv_2=a_{s_2}\bv_1$. Moreover
  $T_{s_2}T_{s_1}\bv_2=0$, $T_{s_1}T_{s_2}\bv_2=-a_{s_2}\bv_1$ and
  $T_{s_2}T_{s_1}T_{s_2}\bv_2=0$. By Hypothesis \ref{conditions}, (3)
  we get the braid relations in this case.

  Now fix any $s_1\in (I_1\cap I_2)$. We have $T_{s_1}\bv_1=-\bv_1$
  and $T_{s_1}\bv_2=-\bv_2$. Consider the case where
  $s_2\in I_2\backslash (I_1\cap I_2)$. In this case we have
  $T_{s_2}\bv_1=0$ and $T_{s_2}\bv_2=a_{s_2}\bv_1-\bv_2$. In this case
  we have $(T_{s_i}T_{s_j})^m=-a_{s_2}\bv_1+\bv_2$ and
  $T_{s_j}(T_{s_i}T_{s_j})^m=a_{s_1}\bv_1-\bv_2$ and hence the braid
  relations are satisfied. Consider the case when
  $s_2\in S_{\text{aff}}\backslash (I_1\cup I_2)$. In this case we
  have $T_{s_2}\bv_1=0$ and $T_{s_2}\bv_2=a_{s_2}\bv_1$. Which gives
  the relations
  $T_{s_1}T_{s_2}\bv_2=T_{s_2}T_{s_1}\bv_2=-a_{s_2}\bv_1$,
  $T_{s_2}T_{s_1}T_{s_2}\bv_2=0$ and
  $T_{s_1}T_{s_2}T_{s_1}\bv_2=a_{s_2}\bv_1$. By Hypothesis
  \ref{conditions}, (5) we get the braid relations in this case.

Finally we have to consider the case where
$s_1\in I_2\backslash (I_2\cap I_1)$ and
$s_2\in S_{\text{aff}}\backslash (I_1\cup I_2)$. In this case we have
$T_{s_1}\bv_1=0$, $T_{s_1}\bv_2=a_{s_1}\bv_1-\bv_2$, $T_{s_2}\bv_1=0$
and $T_{s_2}\bv_2=a_{s_2}\bv_1$. This shows that
$T_{s_1}T_{s_2}\bv_2=0$, $T_{s_2}T_{s_1}\bv_2=-a_{s_2}\bv_1$ and
$T_{s_1}T_{s_2}T_{s_1}\bv_2=0$. By Hypothesis \ref{conditions}, (4)
  we get the braid relations in this case.
\end{proof}

We will investigate the structure constants with respect to Baer sum.
Let $\mathfrak{m}$ and $\mathfrak{n}$ be the $\mathcal{H}$ modules
$\xi_{\lambda_1, I_1}$ and $\xi_{\lambda_2, I_2}$.  Assume that
$\lambda_1\neq \lambda_2$. Fix a basis vectors
$\bv_1^0\in \mathfrak{n}$ and $\bv_2^0\in \mathfrak{m}$. There is a
canonical basis of $E$ given by $p(\bv_1^1)$ and $q(\bv_2^1)$ and we
denote them by $\bv_1$ and $\bv_2$.  Consider any two extensions
$\mathfrak{n}\hookrightarrow E_1 \twoheadrightarrow \mathfrak{m}$ and
$ \mathfrak{n}\hookrightarrow E_2 \twoheadrightarrow \mathfrak{m}$ and
the Baer sum $E_1\dotplus E_2$ is given by the following commutative
diagram:
\begin{equation}\label{baer_sum}
\begin{tikzpicture}[node distance=2.5cm,auto]
\node(A_1){$0$};
\node(B_1)[right of=A_1]{$\mathfrak{n}\oplus\mathfrak{n}$};
\node(C_1) [right of=B_1] {$E_1\oplus E_2$};
\node(D_1) [right of=C_1] {$\mathfrak{m}\oplus\mathfrak{m}$};
\node(E_1) [right of=D_1] {$0$};
\node(A_2)[below of =A_1]{$0$};
\node(B_2)[below of=B_1]{$\mathfrak{n}\oplus\mathfrak{n}$};
\node(C_2) [below of=C_1] {$E'$};
\node(D_2) [below of=D_1] {$\mathfrak{m}$};
\node(E_2) [right of=D_2] {$0$};
\node(A_3)[below of =A_2]{$0$};
\node(B_3)[below of=B_2]{$\mathfrak{n}$};
\node(C_3)[below of =C_2]{$E_1\dotplus E_2$};
\node(D_3)[below of =D_2]{$\mathfrak{m}$};
\node(E_3)[below of =E_2]{$0$};
\draw[->] (A_1) -- (B_1);
\draw[->] (B_1) to node[above]{$p_1\oplus p_2$}(C_1);
\draw[->](C_1) to node[above]{$q_1\oplus q_2$} (D_1);
\draw[->](D_1)--(E_1);
\draw[->](A_2) -- (B_2);
\draw[->](B_2) to node[above]{$p'$}(C_2);
\draw[->] (C_2) to node[above]{$q'$}(D_2);
\draw[->](D_2) --(E_2);
\draw[->](A_3) -- (B_3);
\draw[->](B_3) to node[above]{$p_3$}(C_3);
\draw[->] (C_3) to node[above]{$q_3$}(D_3);
\draw[->](D_3)--(E_3);
\draw[->](B_2) to node{$f_1=\id$}(B_1);
\draw[->](B_2) to node{$\Sigma$}(B_3);
\draw[->](C_2) to node{$f_2$}(C_1);
\draw[->](C_2) to node{$g_2$}(C_3);
\draw[->](D_2) to node{$\Delta$}(D_1);
\draw[->](D_2) to node{$g_3=\id$}(D_3);
\end{tikzpicture} \end{equation}
Here $\Delta$ and $\Sigma$ are diagonal and the sum maps
respectively. The two rows are pullback and push-out by $\Delta$ and
$\Sigma$ respectively.

We denote by $a_s$, $a_s'$ and $a_s''$ by the structure constants of
$E_1$, $E_2$ and $E_1\dotplus E_2$ respectively. Let $\bv_2'$ be a
vector in $E'$ pulled back via $q'$. Since $\lambda_1\neq \lambda_2$
the vector $\bv_2'$ is unique. Let $f_2(\bv_2')=(\bv_2^1+\bv_2^2)$ for
$\bv_1^1$ and $\bv_2^2$ in the first and second summand of
$E_1\oplus E_2$. Let $\bv_1'$ and $\bv_1''$ be two vectors in each
summand of $\mathfrak{n}\oplus \mathfrak{n}$. Now
$T_{s}\bv_2'=b_s\bv_1'+d_s\bv_1''-\delta_{I_2}(s)\bv_2$ and
$$f_2(T_s\bv_2)=b_sf_2(\bv_1')+c_sf_2(\bv_1'')
-\delta_{I_2}(s)\bv_2^1-\delta_{I_2}(s)\bv_2^2.$$ Comparing the action
of $T_s\bv_2^1$ and $T_s\bv_2^2$ we get that $a_s=b_s$ and
$a_s'=c_s$. Finally considering
$g_2(T_s\bv_2')=(b_s+c_s)\bv_1-\delta_{I_2}\bv_2$ we get that
$a_s''=a_s+a_s'$. This shows that the map sending $E$ to
$(a_s)_{s\in S^{\text{aff}}}$ is injective and surjective onto
functions $(a_s)_{s\in S^{\text{aff}}}$ satisfying the conditions of
Lemmas \ref{quadratic_relations} and \ref{first_braid}.

Now we consider the case where $\lambda_1=\lambda_2$. In this case
there is no canonical basis of $E$ stable under the action of $T_t$
for $t\in Z_k$. If we choose a non-canonical basis then the structure
constants are determined only upto translation by a certain
function. Fix two vectors $\bv_1$ and $\bv_2$ of $\mathfrak{n}$ and
$\mathfrak{m}$ respectively. Now choose a section
$s:\mathfrak{m}\rightarrow E$ of the map $q$. Let $\bv'$ be the vector
$s(\bv_2)$. Now we note that
$T_s\bv'=a_s\bv_1-\delta_{I_2}(s)\bv'$. Now if $s'$ is another section
of the map $q$ then image of $s-s'$ is contained in $\mathfrak{n}$
hence $s'(\bv_2)=s(\bv_2)+k\bv_1$. Let $\bv''=s'(\bv_2)$ and we have
$T_s\bv''=(a_s-k\delta_{I_2}(s)+k\delta_{I_1}(s))\bv_1-\delta_{I_2}(s)\bv''$.
Hence the map sending $E$ to $s\mapsto a_s$ gives a map from
$\Ext^1_{\mathcal{H}}(\mathfrak{m}, \mathfrak{n})$ to functions on
$S_{\text{aff}}$, denoted by $\bar{k}^{S_{\text{aff}}}$, modulo the
function spanned by $\delta_{I_2}-\delta_{I_1}$. We denote by $\theta$
this map
\begin{equation}\label{strange_homology}
\theta:\Ext^1_{\mathcal{H}}(\xi_{\lambda_1, I_1}, \xi_{\lambda_2, I_2})\rightarrow
 \dfrac{\bar{k}^{S_{\text{aff}}}}{<\delta_{I_2}-\delta_{I_1}>}.
\end{equation}
The map $\theta$ is non-canonical and depends on the choice of $\bv_1$
and $\bv_2$, but these vectors are determined upto a scalar. 
\begin{lemma}
The map $\theta$ is a linear map and moreover is injective. 
\end{lemma}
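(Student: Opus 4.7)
The plan is to verify linearity by adapting the Baer sum argument from the $\lambda_1 \neq \lambda_2$ case to account for the non-canonical choice of section, and then to verify injectivity by exhibiting a splitting whenever the image $(a_s)$ lies in the span of $\delta_{I_2} - \delta_{I_1}$. Both parts are essentially direct computations using the formula $T_s \bv' = a_s \bv_1 - \delta_{I_2}(s) \bv'$ for the action on a chosen lift of $\bv_2$.

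For linearity, I would first handle scalar multiplication by $\alpha \in \bar{k}$ via the pullback: form $E' = \{(e,m) \in E \times \mathfrak{m} : q(e) = \alpha m\}$ with the obvious maps, lift $\bv_2$ to $(\alpha \bv', \bv_2) \in E'$, and compute directly that its action yields new structure constants $\alpha a_s$. For additivity, I would replay the Baer sum diagram already displayed in the excerpt: choose $Z_k$-equivariant lifts $w_i \in E_i$ of $\bv_2$, observe that $(w_1, w_2)$ is a lift in the pullback $E'$ of $E_1 \oplus E_2$, and use the identification $(p_1(\bv_1), 0) \sim (0, p_2(\bv_1))$ in the pushout quotient $E_1 \dotplus E_2$ to conclude $a_s'' = a_s + a_s'$, exactly as in the $\lambda_1 \neq \lambda_2$ case already worked out.

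For injectivity, suppose $\theta([E]) = 0$, so there exists $k \in \bar{k}$ with $a_s = k(\delta_{I_2}(s) - \delta_{I_1}(s))$ for every $s \in S^{\text{aff}}$. I would set $\bv'' := \bv' - k \bv_1 \in E$. Since $\lambda_1 = \lambda_2 =: \lambda$, both $\bv'$ and $\bv_1$ transform by $\lambda$ under $Z_k$, hence so does $\bv''$; in particular $\bv'' = s'(\bv_2)$ for some $Z_k$-equivariant section $s'$. Substituting into the formula recorded just before the statement,
$$T_s \bv'' = \bigl(a_s - k\delta_{I_2}(s) + k\delta_{I_1}(s)\bigr) \bv_1 - \delta_{I_2}(s) \bv'' = -\delta_{I_2}(s) \bv'',$$
so $R \bv'' \subset E$ is an $\mathcal{H}$-submodule isomorphic to $\mathfrak{m}$ via $\bv'' \mapsto \bv_2$. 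This yields a splitting of $q$, so $[E] = 0$ in $\Ext^1_{\mathcal{H}}(\mathfrak{m}, \mathfrak{n})$, proving injectivity.

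The main technical care is in the linearity step, where the section is canonical only after a choice and one must track that the lifts of $\bv_2$ chosen in $E_1$ and $E_2$ assemble correctly in the pullback and descend correctly to the pushout in a $Z_k$-equivariant way. Once that bookkeeping is in place, both the linearity check and the injectivity argument reduce to one-line computations with the formula for $T_s \bv'$.
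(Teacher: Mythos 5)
Your proposal is correct and follows essentially the same route as the paper: additivity is verified by the same Baer-sum diagram chase, tracking the correction terms $k_i(\delta_{I_2}-\delta_{I_1})$ arising from the non-canonical sections, and injectivity by adjusting the lift of $\bv_2$ by a multiple of $\bv_1$ to kill the structure constants and exhibit a splitting. If anything you are slightly more complete than the paper, which omits the scalar-multiplication check and merely asserts injectivity without writing down the splitting section.
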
 
\begin{proof}
  Let us fix a section $s:\mathfrak{m}\rightarrow E'$ of the map $q$
  in \eqref{baer_sum}. We also fix sections
  $s_i:\mathfrak{m}\rightarrow E_i$ of $q_i$.  This also gives a
  section $s_1$ to the map $q_3$. Let $\bv'_2=s(\bv_2)$ and we denote
  by $\bv_2^1$ and $\bv_2^2$ vectors in each summand of
  $E_1\oplus E_2$ such that $f_2(\bv_2')=\bv_2^1+\bv_2^2$.  Let
  $\bv_1'$ and $\bv_1''$ be two vectors in each summand of
  $\mathfrak{n}\oplus \mathfrak{n}$. Now
  $T_{s}\bv_2'=b_sp'(\bv_1')+d_sp'(\bv_1'')-\delta_{I_2}(s)\bv_2$ and
$$f_2(T_s\bv_2)=b_sf_2(\bv_1')+c_sf_2(\bv_1'')
-\delta_{I_2}(s)\bv_2^1-\delta_{I_2}(s)\bv_2^2.$$ Now the vectors
$\bv_2^1$ and $\bv_2^2$ differ from $s_1(\bv_2)$ and $s_2(\bv_2)$ by
$k_1p_1(\bv_1)$ and $k_2p_2(\bv_1)$. Comparing the action of
$T_s\bv_2^1$ and $T_s\bv_2^2$ we get that
$a_s=b_s+k_1(\delta_{I_2}-\delta_{I_1})$ and
$a_s'=c_s+k_2(\delta_{I_2}-\delta_{I_1})$. This shows that
$g_2(T_s\bv_2')$ is equal to the difference of
$(b_s+c_s)\bv_1-\delta_{I_2}\bv_2$ by
$(k_1+k_2)(\delta_{I_2}-\delta_{I_1})$. Hence the map $\theta$ is
linear map. The injectivity is clear from the definition since the
vanishing of the function $s\mapsto a_{s}$ for all
$s\in S_{\text{aff}}$ implies $E$ splits.
\end{proof}
With this we are ready to state the main result of this article. We
introduce some notations for the main results. Let
$I(\lambda_1, \lambda_2)$ be the subset of $S_{\text{aff}}$ such that
$\lambda_1^{s}=\lambda_2$. Let $I(\lambda_1, I_2)$ be the intersection
of $I(\lambda_1, \lambda_2)$ and
$$ \{s\in S_{\text{aff}}\backslash
(S_{\lambda_1}\cup I_2)\ |\ s \ \text{does not satisfy the Hypothesis
}\ \ref{conditions}(3)\ (4)\ \text{and}\ (5)\}.$$ Note that for any
$s\not\in I(\lambda_1, \lambda_2)$ we know that $a_s=0$. Let
$\delta_1=1$ if $I_1\backslash (I_1\cap I_2)\neq \emptyset$ and
$I_1\backslash (I_1\cap I_2)\subset I(\lambda_1, \lambda_2)$ if
otherwise, we set $\delta_1=0$. We set $\delta_2=1$ if
$I_2\subset S_{\lambda_1}$, $I_2\not\subset I_1$ and
$I_2\backslash (I_1\cap I_2)\subset I(\lambda_1, \lambda_2)$ if
otherwise we set $\delta_2=0$.
\begin{theorem}\label{main_theorem}
  Assume that $\lambda_1\neq \lambda_2$ and $I_1\neq I_2$ then the
  dimension of the space
  $\Ext^1_{\mathcal{H}}(\xi_{\lambda_1, I_1}, \xi_{\lambda_2, I_2})$
  is $|I(\lambda_1, I_2)|+\delta_1+\delta_2$ if $I_1$ and $I_2$ does
  not satisfy the Hypothesis \ref{conditions} (2) and is
  $\Ext^1_{\mathcal{H}}(\xi_{\lambda_1, I_1}, \xi_{\lambda_2, I_2})$
  is $|I(\lambda_1, I_2)|+\delta_1+\delta_2-1$ otherwise. If
  $\lambda_1\neq \lambda_2$ and $I_1=I_2$ then the dimension of the
  space
  $\Ext^1_{\mathcal{H}}(\xi_{\lambda_1, I_1}, \xi_{\lambda_2, I_2})$
  is equal to $|I(\lambda_1, I_2)|$.
  
  Assume that $\lambda_1=\lambda_2$ and $I_1\neq I_1$ the dimension of
  the space
  $\Ext^1_{\mathcal{H}}(\xi_{\lambda_1, I_1}, \xi_{\lambda_2, I_2})$
  is equal to $|I(\lambda_1, I_2)|+\delta_1+\delta_2-1$ if $I_1$ and
  $I_2$ does not satisfy the Hypothesis \ref{conditions} (2) and
  $|I(\lambda_1, I_2)|$ otherwise. Now if $\lambda_1=\lambda_2$ and
  $I_1=I_2$ then the dimension of
  $\Ext^1_{\mathcal{H}}(\xi_{\lambda_1, I_1}, \xi_{\lambda_2, I_2})$
  is $|I(\lambda_1, I_2)|$.
\end{theorem}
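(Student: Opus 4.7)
The plan is to translate the theorem into an explicit dimension count for the space of structure-constant data, using the reduction to Hecke-module relations already carried out in the earlier lemmas.

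First, combining Lemmas \ref{quadratic_relations}, \ref{first_braid}, and \ref{final_braid} with the Baer-sum analysis preceding the statement, one obtains the following: for $\lambda_1\neq\lambda_2$ the assignment $E\mapsto(a_s)_{s\in S^{\text{aff}}}$ gives a linear isomorphism $\Ext^1_{\mathcal{H}}(\mathfrak{m},\mathfrak{n})\cong V$, where $V\subseteq\bar{k}^{S^{\text{aff}}}$ is the subspace of tuples satisfying the vanishing conditions of Lemma \ref{quadratic_relations}, the constancy conditions of Lemma \ref{first_braid}, and conditions (2)--(5) of Hypothesis \ref{conditions}; for $\lambda_1=\lambda_2$ the injective map $\theta$ identifies $\Ext^1$ with the image of $V$ in $\bar{k}^{S^{\text{aff}}}/\langle\delta_{I_2}-\delta_{I_1}\rangle$. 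All the content of the theorem is therefore reduced to computing $\dim V$ and then, if applicable, quotienting.

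The second step is to decompose $S^{\text{aff}}$ into the four disjoint subsets $I_1\cap I_2$, $I_1\setminus I_2$, $I_2\setminus I_1$, and $S^{\text{aff}}\setminus(I_1\cup I_2)$, and read off the contribution of each piece. On $I_1\cap I_2$ the vanishing statement of Lemma \ref{quadratic_relations} forces $a_s=0$. On $I_1\setminus I_2$, Lemma \ref{first_braid} forces $a$ to be a single scalar, and the constraint $a_s((s\lambda_2)(t)-\lambda_1(t))=0$ allows this scalar to be nonzero precisely under the conditions defining $\delta_1$, contributing $\delta_1$ free parameters; analogously $I_2\setminus I_1$ contributes $\delta_2$ parameters, using in addition that $a_s=0$ whenever $s\in I_2\setminus S_{\lambda_1}$. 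On the fourth piece each $s\in I(\lambda_1,I_2)$ contributes one free parameter, while the remaining values are killed either by $s\notin I(\lambda_1,\lambda_2)$ or by one of conditions (3)--(5) of Hypothesis \ref{conditions}. Summing gives $\dim V=|I(\lambda_1,I_2)|+\delta_1+\delta_2$, minus one when Hypothesis \ref{conditions}(2) applies and imposes the single linear relation between the two constants (whose premise forces $\delta_1=\delta_2=1$, so the relation is genuinely non-trivial).

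Finally, for $\lambda_1=\lambda_2$ one has to handle the quotient by $\langle\delta_{I_2}-\delta_{I_1}\rangle$: one checks that this function lies in $V$ (the vanishing, constancy, and Hypothesis conditions are all verified directly, and in the Hypothesis (2) case the values $\pm1$ already satisfy the constraint $a_{s_1}+a_{s_2}=0$); and it is nonzero if and only if $I_1\neq I_2$. Quotienting therefore subtracts exactly one dimension when $I_1\neq I_2$ and none when $I_1=I_2$, yielding the four stated formulas. The main obstacle is the bookkeeping around Hypothesis \ref{conditions}(2), specifically verifying that its linear relation is independent of the other constraints, and that when $\lambda_1=\lambda_2$ the containment $\delta_{I_2}-\delta_{I_1}\in V$ continues to hold in the hypothesis-(2) case, so that the two corrections of $-1$ combine consistently into the single $-1$ appearing in the last formula.
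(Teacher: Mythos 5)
Your proposal takes essentially the same route as the paper: the paper's own proof consists of nothing more than the observation that $E\mapsto(a_s)_{s\in S^{\text{aff}}}$ (respectively the map $\theta$) identifies $\Ext^1_{\mathcal{H}}(\xi_{\lambda_1,I_1},\xi_{\lambda_2,I_2})$ with the space of structure constants cut out by Lemmas \ref{quadratic_relations} and \ref{first_braid} together with Hypothesis \ref{conditions}, quotiented by $\langle\delta_{I_2}-\delta_{I_1}\rangle$ when $\lambda_1=\lambda_2$, and you have simply carried out the resulting dimension count (the decomposition into $I_1\cap I_2$, $I_1\setminus I_2$, $I_2\setminus I_1$, and the complement) which the paper leaves implicit. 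The one inaccuracy is the parenthetical claim that the premise of Hypothesis \ref{conditions}(2) forces $\delta_1=\delta_2=1$ --- it only forces $I_1\setminus I_2$ and $I_2\setminus I_1$ to be nonempty --- but since the relation $a_{s_1}+a_{s_2}=0$ still removes exactly one dimension whenever at least one of $\delta_1,\delta_2$ equals $1$, this affects neither your argument nor the stated formula outside the degenerate case $\delta_1=\delta_2=0$, which the theorem's own statement does not treat correctly either.
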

\begin{proof}
  If $\lambda_1\neq \lambda_2$ then we have the map
  $E\mapsto (a_{s})_{s\in S_{\text{aff}}}$ is injective linear map
  onto the image determined by Lemmas \ref{quadratic_relations} and
  \ref{first_braid}. If $\lambda_1=\lambda_2$ then we use the map
  $\theta$ with the same conditions as in Lemmas
  \ref{quadratic_relations} and \ref{first_braid} but now we have to
  quotient the image with span of the function
  $\delta_{I_2}-\delta_{I_1}$.
\end{proof}
\begin{corollary}\normalfont
  Let $\lambda_1$ and $\lambda_2$ be two trivial characters and $I_1$
  and $I_2$ are disjoint and they do not satisfy the Hypothesis
  \ref{conditions} (2) then the dimension of
  $\Ext^1_{\mathcal{H}}(\xi_{\id, I_1}, \xi_{\id, I_2})$ is $1$.
\end{corollary}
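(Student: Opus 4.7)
The plan is to derive this corollary directly from Theorem \ref{main_theorem} by computing each of the three ingredients $|I(\lambda_1, I_2)|$, $\delta_1$, $\delta_2$ in this particularly degenerate case, and then reading off which branch of the theorem applies. Since $\lambda_1 = \lambda_2 = \id$ and $I_1 \neq I_2$ (they are disjoint, and the failure of Hypothesis \ref{conditions}(2) forces both to be non-empty), we are in the branch of the theorem giving $\dim \Ext^1 = |I(\lambda_1, I_2)| + \delta_1 + \delta_2 - 1$.

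First I would unpack $S_{\lambda_1}$ and $I(\lambda_1, \lambda_2)$ for the trivial character. Since $c_s$ is an averaging element over $Z_{k,s}$, the trivial character $\lambda_1 = \id$ satisfies $\lambda_1(c_s) = 1 \neq 0$ for every $s \in S^{\text{aff}}$, so $S_{\lambda_1} = S^{\text{aff}}$. Similarly the conjugate character $\lambda_1^s$ is again trivial for every $s$, hence $I(\lambda_1, \lambda_2) = S^{\text{aff}}$. Feeding this into the definition of $I(\lambda_1, I_2)$, we see that $S^{\text{aff}} \setminus (S_{\lambda_1} \cup I_2) = \emptyset$, and consequently $|I(\lambda_1, I_2)| = 0$.

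Next I would check the two correction terms. Because $I_1 \cap I_2 = \emptyset$, we have $I_1 \setminus (I_1 \cap I_2) = I_1$, which is non-empty (the element $s_1$ provided by the failure of Hypothesis (2) lies in $I_1$), and trivially $I_1 \subset S^{\text{aff}} = I(\lambda_1, \lambda_2)$; hence $\delta_1 = 1$. For $\delta_2$, the inclusion $I_2 \subset S_{\lambda_1} = S^{\text{aff}}$ is automatic, $I_2 \not\subset I_1$ holds by disjointness and non-emptiness of $I_2$, and $I_2 \setminus (I_1 \cap I_2) = I_2 \subset I(\lambda_1, \lambda_2)$; so $\delta_2 = 1$.

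Substituting into the formula from Theorem \ref{main_theorem} yields $\dim \Ext^1_{\mathcal{H}}(\xi_{\id, I_1}, \xi_{\id, I_2}) = 0 + 1 + 1 - 1 = 1$, as claimed. There is no real obstacle here; the corollary is a purely bookkeeping consequence of the main theorem once one recognises that the triviality of $\lambda_1 = \lambda_2$ forces $S_{\lambda_1}$ and $I(\lambda_1, \lambda_2)$ to be the entire set $S^{\text{aff}}$, collapsing $|I(\lambda_1, I_2)|$ to zero while keeping both $\delta_1$ and $\delta_2$ equal to $1$.
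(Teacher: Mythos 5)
Your overall route is the right one (indeed the only one available): the corollary is pure bookkeeping from Theorem \ref{main_theorem}, and your computations of $S_{\lambda_1}=S^{\text{aff}}$, $I(\lambda_1,\lambda_2)=S^{\text{aff}}$, hence $|I(\lambda_1,I_2)|=0$, and the final count $0+1+1-1=1$ all agree with what the theorem gives. The one genuine gap is your justification that $I_1$ and $I_2$ are both non-empty, which is exactly what you need to conclude $\delta_1=\delta_2=1$. You derive non-emptiness from ``the failure of Hypothesis \ref{conditions} (2)'', reading that phrase as asserting the existence of elements $s_1\in I_1$ and $s_2\in I_2$ with $s_1s_2$ of order $2$. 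That reading is inconsistent with the rest of the paper: in Theorem \ref{main_theorem} the branch ``$I_1$ and $I_2$ do not satisfy the Hypothesis \ref{conditions} (2)'' is the one \emph{without} the extra $-1$ coming from the linear relation $a_{s_1}+a_{s_2}=0$, so it must mean that \emph{no} such commuting pair exists (this is also how the condition is used in Remark \ref{counter_example}, where $S^{\text{aff}}$ is of type $\widetilde{A}_2$ and no product $s_is_j$ has order $2$, and in the unitary-group propositions, where $s_1s_2$ has infinite order). Under that reading the condition is vacuously satisfied whenever one of the $I_i$ is empty, so it gives you no control on non-emptiness.

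This matters because the corollary is actually false without non-emptiness: if, say, $I_1=\emptyset$ and $I_2\neq\emptyset$, then $I_1$ and $I_2$ are disjoint and trivially ``do not satisfy'' Hypothesis \ref{conditions} (2), yet $\delta_1=0$, $\delta_2=1$ and the formula yields $0+0+1-1=0$. So $I_1,I_2\neq\emptyset$ is an unstated hypothesis of the corollary that must simply be assumed (as the paper implicitly does, cf.\ the application $I_1=\{s_1\}$, $I_2=\{s_2,s_3\}$ for ${\rm SL}_3$ in Remark \ref{counter_example}); it cannot be extracted from the stated hypotheses, and your attempt to do so is where the argument breaks. With non-emptiness granted, your verification of $\delta_1=1$ and of $\delta_2=1$ (in particular $I_2\not\subset I_1$ because $I_1\cap I_2=\emptyset$ and $I_2\neq\emptyset$) is correct and the corollary follows.
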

\begin{example}
  \normalfont We verify this calculation for
  ${\rm SL}_2(\mathbb{Q}_p)$.  These results are proved by other
  methods in \cite{sl2_arxiv}. Let $\chi_r$ be the character of $Z_k$
  sending $t\mapsto t^r$ for $0\leq r<p-1$. Let $S_{\text{aff}}$ be
  the set $\{s_0, s_1\}$, the generators of the affine Weyl group
  $W$. Here $s_0s_1$ has infinite order hence the only relavent
  conditions in Hypothesis \ref{conditions} is the condition $(1)$. We
  use the notation $\xi_{r, I}$ for the character $\xi_{\chi_r,
    I}$. The characters of the affine Hecke algebra are given by
  $\xi_{r, \emptyset}$ for $0<r<p-1$, $\xi_{0, s_0}$, $\xi_{0, s_1}$,
  $\xi_{0, \emptyset}$ and $\xi_{0, S_{\text{aff}}}$. The set of
  characters $\{\xi_{0, \emptyset}, \xi_{0, S_{\text{aff}}}\}$ are not
  supersingular and rest of the characters are supersingular.  We
  first consider the regular case ($r\neq 0$). Consider the case when
  $\mathfrak{m}=\xi_{r_1, \emptyset}$ and
  $\mathfrak{n}=\xi_{r_2, \emptyset}$. The set
  $I(\chi_{r_1}, \chi_{r_2})\neq \emptyset$ if and only if
  $r_1+r_2=p-1$. In which case
  $I(\chi_{r_1}, \chi_{r_2})=S_{\text{aff}}$. We may and do assume
  that $0<r_i= (p-1)/2$ for $i \in \{1,2\}$. The sets
  $S_{\lambda_1}=I_1=I_2=S_{\lambda_2}=\emptyset$. Hence
  $I(S_{\lambda_1}, I_{\lambda_2})=S_{\text{aff}}$. This shows that
  the space of extensions
  $\Ext^1_{\mathcal{H}}(\xi_{r_1, \emptyset}, \xi_{p-1-r_2,
    \emptyset})$ has dimension $2$. If $r_1=r_2=1$ we have $I_1=I_2$
  and $\lambda_1=\lambda_2$ case of \ref{main_theorem} and hence the
  dimension of the space
  $\Ext^1_{\mathcal{H}}(\xi_{r_1, \emptyset}, \xi_{p-1-r_2,
    \emptyset})$ has dimension $2$.

  Consider the case when $r_1=0$ (the Iwahori--case) then the set
  $I(\chi_{r_1}, \chi_{r_2})\neq \emptyset$ if and only if
  $r_2\in \{p-1, 0\}$. We may assume that $r_2=0$. In this case the
  set $I(\chi_{0}, \chi_{0})$ is $S_{\text{aff}}$. The set
  $S_{\chi_0}=S_{\text{aff}}$. Now consider the case when
  $\mathfrak{m}=\xi_{\chi_0, s_i}$ and
  $\mathfrak{n}=\xi_{\chi_0, s_j}$. The set
  $I(\chi_0, \{s_j\})=\emptyset$ and note that $\delta_1=1$ and
  $\delta_2=1$ if $s_i\neq s_j$. If $s_i=s_j$ then $\delta_1=0$ and
  $\delta_2=0$ since $I_2\not\subset I_1$ condition is not
  satisfied. This shows that the dimension of the space
  $\Ext^1_{\mathcal{H}}(\xi_{0, s_i}, \xi_{0, s_j})=1$ if $i\neq j$
  and is zero otherwise.
\end{example}
\begin{remark}\label{counter_example}\normalfont
  In the case of ${\rm SL}_n$ the $L$-packets are defined by Koziol as
  conjugation by ${\rm PGL}_n(K)$ (see \cite[Definition
  6.4]{koziol_sl_packets}). For $n=3$ the Hypothesis \ref{conditions}
  (2) is not relavent. If $\xi_{\lambda_1, I_1}$ and
  $\xi_{\lambda_2, I_2}$ are in the same $L$-packets then the sets
  $|I_1|=|I_2|$. Now consider the simple example for ${\rm SL}_3$ the
  set $S^{\text{aff}}=\{s_1,s_2,s_3\}$ and assume $I_1=\{s_1\}$ and
  $I_2=\{s_2,s_3\}$. The above corollary shows that extensions exist
  among distinct $L$-packets. The notion of blocks and $L$-packets in
  the supersingular case of higher rank groups are different and the
  relationship is not clear in the higher rank cases.
\end{remark}
\section{Blocks for unramified unitary groups in
  \texorpdfstring{$2$}{} and \texorpdfstring{$3$}{} variables.}
As an application we deduce the extensions of simple supersingular
$\mathcal{H}$ modules of unramified groups $U(2,1)$ and $U(1,1)$. In
these cases we will try to precisely understand the relation between
extensions and $L$-packets. The Iwahori--Hecke module structure of
$U(2,1)$ and $U(1,1)$ are studied by Abdellatif, Koziol-Xu and Koziol
in the articles (see\cite{ramla_thesis}, \cite{koziol_unitary_three}
and \cite{koziol_unitary_two}).

Let $L$ be a unramified quadratic extension of $K$ and $(W, h)$ be a
pair consisting of a $3$-dimensional vector space $W$ over $L$ and $h$
be a non-degenerate hermitian form on $W$. We denote by $k_L$ the
residue field of $L$ which is a quadratic extension of $k$.  Let
$\sch{G}$ be the isometry group scheme over $K$ associated to the pair
$(W, h)$. In this case the maximal $K$-split torus $\sch{T}$ is
isomorphic to $(\mul/K)$. The normaliser of $\sch{Z}$ of $\sch{T}$ is
isomorphic to ${\rm Res}_{L/K}\mul\times \sch{U}(1)(L/K)$. The group
$\sch{Z}_k$ is isomorphic to
${\rm Res}_{k_L/k}\mul\times \sch{U}(1)(k_L/k)$ such that the
determinant map is the second projection
$\sch{Z}_k\rightarrow \sch{U}(1)(k_L/k)$. Let us fix a chamber $C$ and
the set $S^{\text{aff}}=\{s_1,s_2\}$ where $s_1$ and $s_2$ are two
affine reflections in the walls of $C$. The order of $s_1s_2$ is
infinite and hence the relevant conditions in Hypothesis
\ref{conditions} is the condition $(1)$.

For quadratic relations we need to describe the groups $Z_{k, s_1}$
and $Z_{k, s_2}$. By abuse of notation we identify the faces fixed by
$s_i$ with $s_i$. With out loss of generality we assume that
$\sch{G}_{s_1, k}$ is isomorphic to $\sch{U}(2,1)(k_L/k)$ and
$\sch{G}_{s_2,k}$ is isomorphic to
$\sch{U}(1,1)(k_L/k)\times U(1)(k_L/k)$. This shows that the group
$<\sch{U}_{s_1, k}, \overline{\sch{U}}_{s_1,k}>\cap \sch{Z}_k$ is
isomorphic to ${\rm Res}_{k_L/k}\mul$ and
$<\sch{U}_{s_2, k}, \overline{\sch{U}}_{s_2,k}>\cap \sch{Z}_k$ is
isomorphic to $\mul/k$. The group $\sch{Z}_{k,s_2}$ embeds in
$\sch{Z}_k$ and is isomorphic to the first factor. Similarly the
group $\sch{Z}_{k,s_2}$ also embeds into the first factor of
$\sch{Z}_k$. The groups $Z_{s_0,k}\simeq k_L^{\times}$ and
$Z_{s_1,k}\simeq k^{\times}$. Since $\Lambda$ is commutative the group
$W(1)$ acts on $Z_k$ by the quotient
$W(1)\rightarrow W_0\simeq \{\id, s_1\}$.

Let $\zeta:k_E^\times\rightarrow \bar{k}$ and
$\eta:U(1)\rightarrow \bar{k}$ be any two characters then we denote by
$\chi$ the character $\zeta\otimes\eta$. Let $x\mapsto \bar{x}$ be the
nontrivial Galois automorphism on $k_L$. The character $\chi^{s_1}$ is
given by $\overline{\zeta}\otimes\eta$ where
$\overline{\zeta}(x)=\zeta(\bar{x}^{-1})$. Note that the character
$\chi=\zeta\otimes\eta$ is trivial on $Z_{k, s_1}$ if and only if
$\zeta$ is trivial and $\chi$ is trivial on $Z_{k, s_2}$ if and only
if $\zeta^{q+1}=\id$. These cases are called as trivial-Iwahori and
hybrid respectively by Koziol--Xu in these cases $\chi^{s_1}=\chi$. If
$\chi$ is non-trivial on $Z_{k,s_1}$ and $Z_{k, s_2}$ then the
character $\chi$ is called as regular and $\chi^{s_1}\neq \chi$. Now we
list the various characters of
$\mathcal{H}=\mathcal{H}^{\text{aff}}$. We use the description of
supersingular characters given by Vigneras but we point out that these
are also described by Koziol--Xu.
\begin{enumerate}
\item If $\chi=\zeta\otimes\eta$ is a trivial-Iwahori type then we
  have $S_{\chi}=S^{\text{aff}}$. So we have two supersingular
  characters $\xi_{\chi, s_1}$ and $\xi_{\chi, s_2}$. The characters
  $\xi_{\chi, S^{\text{aff}}}$ and $\xi_{\chi, \emptyset}$ are not
  supersingular.
\item If $\chi=\zeta\otimes\eta$ is hybrid then $S_{\chi}=\{s_2\}$ and
  in this case we have two supersingular characters $\xi_{\chi, s_2}$
  and $\xi_{\chi, \emptyset}$,
\item If $\chi=\zeta\otimes\eta$ is a regular character then we have
  only one supersingular character $\xi_{\chi, \emptyset}$,
\end{enumerate}
Note that if $\chi$ is a trivial or hybrid type character then $I(\chi,
\chi')$ is not an empty set if and only if $\chi=\chi'$.   
\begin{proposition}
  Let $\chi$ be a trivial character then the dimension of
  $\Ext^1_{\mathcal{H}}(\xi_{\chi, s_i}, \xi_{\chi, s_j})$ is $1$
  $i\neq j$ and is $0$ otherwise. If $\chi$ is a hybrid character then
  the dimension of
  $\Ext^1_{\mathcal{H}}(\xi_{\chi, I_1}, \xi_{\chi, I_2})$ is $1$ for
  all $I_1, I_2\subset \{s_2\}$. If $\chi$ is regular the dimension of
  the space
  $\Ext^1_{\mathcal{H}}(\xi_{\chi, \emptyset}, \xi_{\chi',
    \emptyset})$ is $2$ when $\chi^{s_0}=\chi'$ and zero otherwise.
\end{proposition}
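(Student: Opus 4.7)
The plan is to deduce the proposition as a direct application of Theorem \ref{main_theorem}; since the heavy lifting has already been carried out there, only a case-by-case bookkeeping remains. The key structural simplification is that $S^{\text{aff}} = \{s_1, s_2\}$ and $s_1 s_2$ has infinite order, so conditions (2)--(5) of Hypothesis \ref{conditions} are vacuous and only the quadratic-relation constraints (item (1)) survive. In particular $I(\lambda_1, I_2)$ reduces to $I(\lambda_1, \lambda_2) \cap (S^{\text{aff}} \setminus (S_{\lambda_1} \cup I_2))$, and the indicators $\delta_1, \delta_2$ reduce to simple inclusion checks among $I_1, I_2, S_{\lambda_1}$ and $I(\lambda_1, \lambda_2)$.

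Before entering the case analysis I would first pin down $I(\chi, \chi')$. Because the action of $W(1)$ on $Z_k$ factors through $W_0 \simeq \{\id, w\}$ and both affine reflections $s_1, s_2$ project to the non-trivial element $w$, one has $\chi^{s_1} = \chi^{s_2} = \bar\zeta \otimes \eta$ for $\chi = \zeta \otimes \eta$. Hence $I(\chi, \chi')$ equals $S^{\text{aff}}$ when $\chi' = \bar\zeta \otimes \eta$ and is empty otherwise. In the trivial-Iwahori and hybrid cases $\bar\zeta = \zeta$ (because $\zeta = \id$, respectively $\zeta^{q+1} = \id$), so $I(\chi, \chi) = S^{\text{aff}}$; in the regular case $\bar\zeta \neq \zeta$, so $I(\chi, \chi) = \emptyset$ while $I(\chi, \chi^{s_1}) = S^{\text{aff}}$.

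With this in hand the three assertions follow by enumeration. For $\chi$ of trivial-Iwahori type, $S_\chi = S^{\text{aff}}$ forces $I(\chi, I_2) = \emptyset$ in every subcase; when $(I_1, I_2) = (\{s_i\}, \{s_j\})$ with $i \neq j$ a direct check gives $\delta_1 = \delta_2 = 1$, so the formula yields $0 + 1 + 1 - 1 = 1$, while $i = j$ lands in the $I_1 = I_2$ clause with value $0$. For $\chi$ of hybrid type, $S_\chi = \{s_2\}$ forces $I(\chi, I_2) = \{s_1\}$ for every $I_2 \subset \{s_2\}$; inspecting the four subcases $(I_1, I_2) \subset \{s_2\}$ and tracking $\delta_1, \delta_2$ verifies that the dimension is $1$ in each. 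For $\chi$ regular, $S_\chi = \emptyset$ and $I_1 = I_2 = \emptyset$, so Theorem \ref{main_theorem} returns $|I(\chi, \chi')|$, which is $2$ precisely when $\chi' = \chi^{s_1}$ and $0$ otherwise.

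The only real obstacle is the careful bookkeeping of $\delta_1, \delta_2$ in boundary subcases where one of $I_1, I_2$ is empty --- one must remember, for instance, that $\emptyset \subset I_1$ automatically, so $\delta_2 = 0$ in the subcase $(I_1, I_2) = (\{s_2\}, \emptyset)$ even though the inclusion $I_2 \setminus (I_1 \cap I_2) \subset I(\chi, \chi)$ holds vacuously. No new ideas beyond Theorem \ref{main_theorem} are required.
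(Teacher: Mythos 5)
Your proposal is correct and follows essentially the same route as the paper: both arguments are a direct application of Theorem \ref{main_theorem}, computing $I(\chi,\chi')$, $I(\lambda_1,I_2)$ and the indicators $\delta_1,\delta_2$ case by case for the trivial-Iwahori, hybrid and regular characters (the paper states the vacuity of Hypothesis \ref{conditions}(2)--(5) and the description of $I(\chi,\chi')$ just before the proposition rather than inside the proof). Your bookkeeping of the boundary subcases, e.g.\ $\delta_2=0$ when $I_2=\emptyset$ because $\emptyset\subset I_1$, matches the paper's.
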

\begin{proof}
  Let $\xi_{\lambda_1, I_1}$ and $\xi_{\lambda_2, I_2}$ be the
  characters $\xi_{\chi, s_i}$ and $\xi_{\chi, s_j}$ respectively. We
  observed that $I(\chi, \chi)=S^{\text{aff}}$ and we have
  $I(\chi, \{s_i\})=\emptyset$. If $i\neq j$ then we have
  $I_2\not\subset I_1$ and hence $\delta_2=1$ and
  $I_1\backslash (I_1\cap I_2)$ is nonempty which gives
  $\delta_1=1$. If $i=j$ we have $\delta_1=0$. The subsets $I_1=I_2$
  hence $\delta_2=0$. Applying Theorem \ref{main_theorem} we get that
  the dimension of the extension space
  $\Ext^1_{\mathcal{H}}(\xi_{\chi, s_i}, \xi_{\chi, s_j})$ is $1$ if
  $i\neq j$ and the dimension of
  $\Ext^1_{\mathcal{H}}(\xi_{\chi, s_i}, \xi_{\chi, s_i})$ is $0$.
 
  Assume that $\chi$ is hybrid character. Let $\xi_{\lambda_1, I_1}$
  and $\xi_{\lambda_2, I_2}$ be the characters $\xi_{\chi, s_2}$ and
  $\xi_{\chi, \emptyset}$ respectively. We note that
  $I(\lambda_1, \lambda_2)=S^{\text{aff}}$, the set
  $I(\lambda_1, I_2)=\{s_1\}$. Now if $I_1=\{s_2\}$ and $I_2=\{s_2\}$
  we have $\delta_1=0$ and $\delta_2=0$. The dimension of the space
  $\Ext^1_{\mathcal{H}}(\xi_{\chi, s_2}, \xi_{\chi, s_2})$ is
  $1$. Assume that $I_1=\{\emptyset\}$ and $I_2=\{s_2\}$ then
  $\delta_1=0$ and $\delta_2=1$ and hence the dimension of
  $\Ext^1_{\mathcal{H}}(\xi_{\chi, s_2}, \xi_{\chi, \emptyset})$ is
  $1$.  If $I_1=\{s_2\}$ and $I_2=\{\emptyset\}$ then we have
  $\delta_1=1$ and $\delta_2=0$ hence the dimension of the space
  $\Ext^1_{\mathcal{H}}(\xi_{\chi, s_2}, \xi_{\chi, \emptyset})$ is
  $1$. Finally assume that $I_1=I_2=\emptyset$ in this case
  $\delta_1=0$ and $\delta_2=\emptyset$ the dimension of
  $\Ext^1_{\mathcal{H}}(\xi_{\chi, \emptyset}, \xi_{\chi, \emptyset})$
  is $1$.

  Finally we consider the case when $\chi$ is regular. Assume that
  $\xi_{\lambda_1, I_1}$ and $\xi_{\lambda_2, I_2}$ be the characters
  $\xi_{\lambda_1, \emptyset}$ and $\xi_{\lambda_2, \emptyset}$ and
  assume that $I(\lambda_1, \lambda_2)\neq \emptyset$. In this case we
  have $S_{\lambda_1}=I_1=I_2=S_{\lambda_2}=\emptyset$ and
  $|I(\lambda_1, I_2)|=2$. Moreover we have $\delta_1=0$ and
  $\delta_2=0$ and the dimension of the space
  $\Ext^1_{\mathcal{H}}(\xi_{\lambda_1, \emptyset}, \xi_{\lambda_2,
    \emptyset})$ is $2$.
\end{proof}
Now we will consider the case of unitary group $U(1,1)(L/K)$ where $L$
is unramified over $K$. Let $(W, h)$ be a pair consisting of a $2$
dimensional vector space over $L$ and $h$ be a non-degenerate
hermitian form on $W$. Let $\sch{U}(1,1)$ be the unitary group scheme
over $F$ attached to $(W, h)$. In this case the maximal $K$-split
torus $\sch{T}$ is isomorphic to $\mul/K$, its normalier is isomorphic
to ${\rm Res}_{L/K}\mul$. Hence the group $\sch{Z}_k$ is isomorphic to
${\rm Res}_{k_L/k}\mul$. Lets fix a chamber $C$ and the set
$S^{\text{aff}}$ is given by $\{s_1, s_2\}$. The group schemes
$\sch{G}_{k, s}\simeq \sch{U}(1,1)(k_L/k)$ for $s\in \{s_1,s_2\}$ as
$s_1$ and $s_2$ are conjugate in $GU(1,1)$. This shows that
$\sch{Z}_{k, s_0}$ and $\sch{Z}_{k, s_1}$ are both isomorphic to
$\mul/k$ as $<\sch{U}_{k,s}, \overline{\sch{U}}_{k,s}>={\rm SL}_2/k$.

Now consider any character $\chi$ on $k_E^{\times}$ and the character
$\chi$ is trivial on $Z_{k,s_i}$ if and only if
$\chi^{q+1}=\id$. Following the above case we use the terminology that
$\chi$ is trivial type if $\chi$ is trivial and hybrid type if
$\chi^{q+1}=1$ and $\chi\neq \id$. We also note that $\chi^{s_1}=\chi$
if and only if $\chi^{q+1}=\id$. Now the characters of the algebra
$\mathcal{H}=\mathcal{H}^{\text{aff}}$ are given by
\begin{enumerate}
\item If $\chi$ is either trivial or hybrid character then
  $S_{\chi}=S^{\text{aff}}$ and $\xi_{\chi, s_1}$ and
  $\xi_{\chi, s_2}$ are supersingular characters. The characters
  $\xi_{\chi, \emptyset}$ and $\xi_{\chi, S^{\text{aff}}}$ are not
  supersingular characters.
\item If $\chi$ is a regular character then $S_{\chi}=\emptyset$ and
  the only characters of $\mathcal{H}$ are $\xi_{\chi, \emptyset}$.
\end{enumerate}
To begin with the calculation of extensions, for a character
$\chi:k_L^{\times}\rightarrow R^\times$ such that $\chi^{q+1}=\id$,
the set $I(\chi, \chi')\neq \emptyset$ if and only if
$(\chi')^{q+1}=\id$.
\begin{proposition}
  Let $\chi$ be a character such that $\chi^{q+1}=\id$ then the
  dimension of the space
  $\Ext^1_{\mathcal{H}}(\xi_{\chi, s_i}, \xi_{\chi, s_j})$ is $1$ if
  $i\neq j$ and is zero otherwise. If $\chi$ is a regular character
  then the dimension of the space
  $\Ext^1_{\mathcal{H}}(\xi_{\chi, \emptyset}, \xi_{\chi',
    \emptyset})$ is $2$ if $\chi'=\chi^{s_1}$ and is trivial
  otherwise.
\end{proposition}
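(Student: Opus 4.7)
The plan is to reduce everything to a direct application of Theorem \ref{main_theorem}, after unwinding which of the combinatorial data simplify in the $U(1,1)$ setting. First I would record the structural input from the preceding discussion: $S^{\text{aff}}=\{s_1,s_2\}$ with $s_1s_2$ of infinite order, so Hypothesis \ref{conditions} parts (2)--(5) are vacuous (no pair of distinct reflections has product of order $2$ or $3$). Consequently every element of $S^{\text{aff}}\setminus(S_{\lambda_1}\cup I_2)$ automatically qualifies for membership in $I(\lambda_1,I_2)$, and the ``if $(2)$ is not satisfied'' branch of Theorem \ref{main_theorem} applies throughout.

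Next I would identify $S_\chi$ and the action of $W(1)$ on $Z_k$. Since $Z_{k,s_1}\simeq Z_{k,s_2}\simeq \mul/k$ sits inside ${\rm Res}_{k_L/k}\mul$ via the norm one subgroup's complement, $\chi$ restricts trivially to $Z_{k,s_i}$ iff $\chi^{q+1}=\id$; hence $S_\chi=S^{\text{aff}}$ in the trivial/hybrid case and $S_\chi=\emptyset$ in the regular case. Because $W(1)$ acts on $Z_k$ through $W_0=\{\id,s_1\}$, both $s_1,s_2\in S^{\text{aff}}$ induce the same action on characters, so $I(\chi,\chi')=S^{\text{aff}}$ when $\chi'=\chi^{s_1}$ and $I(\chi,\chi')=\emptyset$ otherwise. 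In particular $I(\chi,\chi)=S^{\text{aff}}$ exactly when $\chi^{q+1}=\id$, and for regular $\chi$ one has $I(\chi,\chi)=\emptyset$.

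With these pieces in place the two cases of the proposition are a bookkeeping check. For $\chi^{q+1}=\id$ and $\mathfrak{m}=\xi_{\chi,s_i}$, $\mathfrak{n}=\xi_{\chi,s_j}$: since $S_\chi=S^{\text{aff}}$, the set $S^{\text{aff}}\setminus(S_\chi\cup\{s_j\})$ is empty, so $|I(\chi,\{s_j\})|=0$. If $i=j$ then $I_1=I_2$ and Theorem \ref{main_theorem} returns $0$. If $i\neq j$ then $I_1\setminus(I_1\cap I_2)=\{s_i\}$ and $I_2\not\subset I_1$, so both $\delta_1$ and $\delta_2$ equal $1$ (the containments in $I(\chi,\chi)=S^{\text{aff}}$ being automatic), and since Hypothesis (2) fails Theorem \ref{main_theorem} gives $0+1+1-1=1$. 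For regular $\chi$ and $\mathfrak{m}=\xi_{\chi,\emptyset}$, $\mathfrak{n}=\xi_{\chi',\emptyset}$: when $\chi'\notin\{\chi,\chi^{s_1}\}$ one has $I(\chi,\chi')=\emptyset$ and so $|I(\chi,\emptyset)|=0$ with $\delta_1=\delta_2=0$; when $\chi'=\chi^{s_1}\neq\chi$ then $I(\chi,\chi')=S^{\text{aff}}$, the set $S^{\text{aff}}\setminus(S_\chi\cup\emptyset)=S^{\text{aff}}$, giving $|I(\chi,\emptyset)|=2$ and again $\delta_1=\delta_2=0$, so the dimension is $2$.

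The only subtle point is the regular $\chi'=\chi$ subcase (not asserted in the statement but implicitly covered by ``trivial otherwise''): here $I(\chi,\chi)=\emptyset$ because $\chi^{s_1}\neq \chi$ for regular $\chi$, so the dimension is $0$, consistent with the claim. There is no real obstacle here beyond making sure the distinction between $I(\lambda_1,\lambda_2)$ and $I(\lambda_1,I_2)$, and the correct branch of Theorem \ref{main_theorem} (based on whether $\lambda_1=\lambda_2$, whether $I_1=I_2$, and whether Hypothesis \ref{conditions}(2) holds), is tracked precisely.
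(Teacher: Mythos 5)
Your proof is correct and follows essentially the same route as the paper: identify $S_\chi$, $I(\chi,\chi')$, $\delta_1$, $\delta_2$ from the $U(1,1)$ data and feed them into Theorem \ref{main_theorem}. In fact your bookkeeping in the case $\chi^{q+1}=\id$, $i\neq j$ is more careful than the paper's, which writes $I(\lambda_1,I_2)=S^{\text{aff}}$ (apparently conflating it with $I(\lambda_1,\lambda_2)$); your value $|I(\chi,\{s_j\})|=0$, forced by $S_\chi=S^{\text{aff}}$, is the one that makes the formula $0+1+1-1=1$ come out right.
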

\begin{proof}
  This situation is similar to ${\rm SL}_2$. Assume that
  $\chi^{q+1}=\id$ and $\xi_{\lambda_1, I_1}$ and
  $\xi_{\lambda_2, I_2}$ be the characters $\xi_{\chi, s_i}$ and
  $\xi_{\chi, s_j}$. In this case we have
  $I(\lambda_1, I_2)=S^{\text{aff}}$ has cardinality two. If $i\neq j$
  we have $\delta_1=1$ and $\delta_2=1$ and since $I_1\neq I_2$ we get
  that the dimension of
  $\Ext^1_{\mathcal{H}}(\xi_{\chi, s_i}, \xi_{\chi, s_j})$ is $1$ if
  $i\neq j$ and is zero otherwise. Assume that $\chi$ and
  $\xi_{\lambda_1, I_1}$ and $xi_{\lambda_2, I_2}$ be the characters
  $\xi_{\chi, \emptyset}$ and $\xi_{\chi^{s_1}, \emptyset}$. The set
  $I(\lambda_1, \lambda_2)\neq \emptyset$ if and only if
  $\lambda_1=\lambda_2^{s_1}$. If
  $ I(\lambda_1, \lambda_2)\neq \emptyset$ then we have
  $I(\lambda_1, \lambda_2)=S^{\text{aff}}$.  The sets
  $I(\lambda_1, \lambda_2)=\emptyset$,
  $S_{\lambda_1}=I_1=I_2=S_{\lambda_2}=\emptyset$. This shows that
  $\delta_1=0$ and $\delta_2=0$. The set $I(\lambda_1, I_2)$ has
  cardinality $2$. This shows that
  $\Ext^1_{\mathcal{H}}(\xi_{\chi, \emptyset}, \xi_{\chi^{s_0},
    \emptyset})$ has dimension $2$.
\end{proof}
\bibliography{../biblio} \bibliographystyle{amsalpha}
\noindent Santosh Nadimpalli,\\
School of Mathematics, Tata Institute of Fundamental Research,
Mumbai, 400005.\\
\texttt{nvrnsantosh@gmail.com}, \texttt{nsantosh@math.tifr.res.in}
\end{document}